\newcounter{figurecounter}
\newtheorem{theorem}{Theorem}
\newtheorem{definition}{Definition}
\newtheorem{lemma}{Lemma}
\newtheorem{notation}{Notation}
\newtheorem{proposition}{Proposition}
\newenvironment{proof}[1][Proof]{\textbf{#1.} }{\ \rule{0.5em}{0.5em}}
\newcommand{\E}{{\rm \bf E}}
\newcommand{\prob}{{\rm \bf P}}
\newcommand{\dR}{{\bf R}}
\newcommand{\calB}{{\cal B}}
\newcommand{\calF}{{\cal F}}
\newcommand{\calV}{{\cal V}}
\begin{document}

\title{The Value Functions of Markov Decision Processes%
\thanks{Lehrer acknowledges the support of the Israel Science Foundation, Grant \#963/15.
Solan acknowledges the support of the Israel Science Foundation, Grants \#323/13.}}

\author{Ehud Lehrer\thanks{School of Mathematical Sciences, Tel Aviv University, Tel Aviv 6997800, Israel and INSEAD, Bd.~de Constance, 77305 Fontainebleau
Cedex, France. e-mail: \textsf{lehrer@post.tau.ac.il}.},
Eilon Solan\thanks{School of Mathematical Sciences, Tel Aviv University, Tel
Aviv 6997800, Israel. E-mail: \textsf{eilons@post.tau.ac.il}.},
and Omri N. Solan\thanks{School of Mathematical Sciences, Tel Aviv University, Tel
Aviv 6997800, Israel. E-mail: \textsf{omrisola@post.tau.ac.il}.}}

\date{\today}

\maketitle

\begin{abstract}
We provide a full characterization of the set of value functions of Markov decision processes.
\end{abstract}

\section{Introduction}

Markov decision processes are a standard tool for studying dynamic optimization problems.
The discounted value of such a problem is the maximal total discounted amount that the decision maker can guarantee to himself.
By Blackwell (1965), the function $\lambda \mapsto v_\lambda(s)$  that assigns the discounted value at the initial state $s$ to each discount factor $\lambda$
is the maximum of finitely many rational functions (with real coefficients).
Standard arguments show that
the roots of the polynomial in the denominator of these rational functions
either lie outside the unit ball in the complex plane, or on the boundary of the unit ball, in which case they have multiplicity 1.
Using the theory of eigenvalues of stochastic matrices one can show that
the roots on the boundary of the unit ball must be unit roots.

In this note we prove the converse result:
every function $\lambda \mapsto v_\lambda$ that is the maximum of finitely many rational functions
such that each root of the polynomials in the denominators either lies outside the unit ball in the complex plane, or is a unit root with multiplicity 1
is the value of some Markov decision process.

\section{The Model and the Main Theorem}

\begin{definition}
A \emph{Markov decision process} is a tuple $(S,A,r,q)$ where
\begin{itemize}
\item   $S$ is a finite set of states.
\item   A distribution $\mu \in \Delta(S)$ according to which the initial state is chosen.\footnotemark
\item   $A = (A(s))_{s \in S}$ is the family of sets of actions available at each state $s \in S$.
Denote $SA := \{(s,a) \colon s \in S, a \in A(s)\}$.
\item   $r : SA \to \dR$ is a payoff function.
\item   $q : SA \to \Delta(S)$ is a transition function.
\end{itemize}
\end{definition}
\footnotetext{For every finite set $X$, the set of probability distributions over $X$ is denoted $\Delta(X)$.}

The process starts at an initial state $s_1 \in S$, chosen according to $\mu$. It then evolves in discrete time:
at every stage $n \in \mathbb{N}$ the process is in a state $s_n \in S$,
the decision maker chooses an action $a_n \in A(s_n)$, and a new state $s_{n+1}$ is chosen according to $q(\cdot \mid s_n,a_n)$.

A \emph{finite history} is a sequence $h_n = (s_1,a_1,s_2,a_2,\cdots, s_n) \in H := \cup_{k = 0}^\infty (SA)^k \times S$.
A \emph{pure strategy} is a function $\sigma : H \to \cup_{s \in S} A(s)$ such that $\sigma(h_n) \in A(s_n)$ for every
finite history $h_n = (s_1,a_1,\cdots,s_n)$,
and a \emph{behavior strategy} is a function $\sigma : H \to \cup_{s \in S} \Delta(A(s))$ such that $\sigma(h_n) \in \Delta(A(s_n))$ for every such finite history.
The set of behavior strategies is denoted $\calB$.
In other words, $\sigma$ assign to every finite history a distribution over $A$, which we call a \emph{mixed action}.
%The set of all behavior strategies is denoted by $\calS$.
A strategy is \emph{stationary} if for every finite history $h_n = (s_1,a_1,\cdots,s_n)$,
the mixed action $\sigma(h_n)$ is a function of $s_n$ and is independent of $(s_1,a_1,\cdots,a_{n-1})$.
Every behavior strategy together with a prior distribution $\mu$ over the state space induce a probability distribution $\prob_{\mu,\sigma}$
over the space of infinite histories $(SA)^\infty$ (which is endowed with the product $\sigma$-algebra).
Expectation w.r.t.~this probability distribution is denoted $\E_{\mu,\sigma}$.

For every discount factor $\lambda \in [0,1)$, the \emph{$\lambda$-discounted payoff} is
\[ \gamma_\lambda(\mu,\sigma) := \E_{\mu,\sigma}\left[\sum_{n=1}^\infty \lambda^{n-1}r(s_n,a_n)\right]. \]
When $\mu$ is a probability measure that is concentrated on a single state $s$ we denote the $\lambda$-discounted payoff also by $\gamma(s,\sigma)$.
The \emph{$\lambda$-discounted value} of the Markov decision process, with the prior $\mu$ over the initial state is
\begin{equation}
\label{equ:value}
v_\lambda(\mu) := \sup_{\sigma \in \calB} \gamma_\lambda(\mu,\sigma).
\end{equation}
A behavior strategy is \emph{$\lambda$-discounted optimal} if it attains the maximum in (\ref{equ:value}).

Denote by $\calV$ the set of all functions $\lambda \mapsto v_\lambda(\mu)$
that are the value function of some Markov decision processes starting with some prior $\mu \in \Delta(S)$.
The goal of the present note is to characterize the set $\calV$.
\begin{notation}

\noindent (i) Denote by $\calF$ the set of all rational functions $\tfrac{P}{Q}$ such that each root of $Q$ is
(a) outside the unit ball, or
(b) a unit root%
\footnote{Recall that a complex number $\omega \in \mathbb{C}$ is a \emph{unit root} if there exists $n \in \mathbb{N}$ such that $\omega^n=1$.}
with multiplicity 1.\\
  (ii) Denote by $Max\calF$ the set of functions that are the maximum of a finite number of functions in $\calF$.
\end{notation}
The next proposition states that any function in $\calV$ is the maximum of a finite number of functions in $\calF$

\begin{proposition}
\label{prop:1}
$\calV\subseteq Max\calF$ .
\end{proposition}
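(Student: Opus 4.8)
The plan is to combine Blackwell's structural result on discounted MDPs with the spectral theory of stochastic matrices. First I would invoke Blackwell (1965) exactly as recalled in the introduction: the value function $\lambda \mapsto v_\lambda(\mu)$ is the maximum of finitely many rational functions, and these rational functions can be taken to be the $\lambda$-discounted payoffs $\gamma_\lambda(\mu,\sigma)$ of the finitely many pure stationary strategies $\sigma$, since for each $\lambda$ some pure stationary strategy is optimal. Thus $v_\lambda(\mu)=\max_\sigma \gamma_\lambda(\mu,\sigma)$ over this finite set, and it suffices to show that for each fixed pure stationary $\sigma$ the function $\lambda\mapsto\gamma_\lambda(\mu,\sigma)$ lies in $\calF$; it then follows immediately that $v_\lambda(\mu)\in Max\calF$.

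Second, for a fixed pure stationary $\sigma$ let $M_\sigma$ be the associated $S\times S$ transition matrix and $r_\sigma$ the associated reward vector. Expanding the discounted payoff as a geometric series gives $\gamma_\lambda(\mu,\sigma)=\sum_{n\ge 1}\lambda^{n-1}\mu^\top M_\sigma^{n-1}r_\sigma=\mu^\top(I-\lambda M_\sigma)^{-1}r_\sigma$ for small $\lambda$, and this identity persists as an identity of rational functions of $\lambda$. By Cramer's rule each entry of $(I-\lambda M_\sigma)^{-1}$ is a rational function whose denominator divides $\det(I-\lambda M_\sigma)=\prod_i(1-\lambda\eta_i)$, the product running over the eigenvalues $\eta_i$ of $M_\sigma$. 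Hence $\gamma_\lambda(\mu,\sigma)=P/Q$ for polynomials $P,Q$, and every root of $Q$ is of the form $1/\eta_i$ with $\eta_i\neq 0$.

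Third, I would verify the two defining properties of $\calF$ using that $M_\sigma$ is stochastic. Since a stochastic matrix has spectral radius $1$, every eigenvalue satisfies $|\eta_i|\le 1$, so every root $1/\eta_i$ satisfies $|1/\eta_i|\ge 1$ and thus lies outside the open unit ball. For the boundary roots, coming from $|\eta_i|=1$, I would use two facts from the theory of nonnegative matrices: (i) the peripheral eigenvalues of a stochastic matrix are roots of unity, by Perron--Frobenius applied to its recurrent classes (whose restrictions are irreducible stochastic matrices, the transient part having spectral radius $<1$), so that $1/\eta_i=\overline{\eta_i}$ is again a unit root; and (ii) such peripheral eigenvalues are semisimple, since all powers $M_\sigma^n$ are stochastic and hence uniformly bounded in $\dC$, which rules out Jordan blocks of size $\ge 2$ for modulus-$1$ eigenvalues.

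The main obstacle is precisely the multiplicity-$1$ requirement at the boundary. The naive denominator $\det(I-\lambda M_\sigma)$ carries each boundary root $1/\eta_i$ with multiplicity equal to the \emph{algebraic} multiplicity of $\eta_i$, which may exceed $1$. To obtain a denominator $Q$ in which boundary roots are simple, I would take $Q$ to be the reciprocal-polynomial version of the \emph{minimal} polynomial $m_{M_\sigma}$ rather than the characteristic polynomial: from $m_{M_\sigma}(M_\sigma)=0$ one extracts a polynomial-matrix identity $m_{M_\sigma}(z)\,I=(zI-M_\sigma)C(z)$, whence $(zI-M_\sigma)^{-1}=C(z)/m_{M_\sigma}(z)$, so the reduced denominator of $\gamma_\lambda(\mu,\sigma)$ divides $m_{M_\sigma}$. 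In the minimal polynomial each eigenvalue appears with multiplicity equal to its largest Jordan block, which by the semisimplicity in (ii) is exactly $1$ when $|\eta_i|=1$. Combining, this $Q$ has all its roots either outside the unit ball or unit roots of multiplicity $1$, so $\gamma_\lambda(\mu,\sigma)\in\calF$, which completes the argument.
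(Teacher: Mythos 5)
Your proposal is correct, and its skeleton coincides with the paper's: invoke Blackwell to reduce to the finitely many pure stationary strategies, then use Cramer's rule to identify the roots of the denominator of $\gamma_\lambda(\mu,\sigma)$ with the reciprocals of the eigenvalues of the transition matrix. Where you genuinely diverge is in how the root conditions are verified. The paper argues analytically: writing $\gamma_\lambda(\mu,\sigma)=\sum_n x_n\lambda^{n-1}$ with $|x_n|$ uniformly bounded forces the reduced denominator to have no roots inside the unit disk and only simple roots on its boundary, and the unit-root property of boundary roots is outsourced to Higham and Lin (2011). You instead argue spectrally: spectral radius $\le 1$ for the location of the roots, Perron--Frobenius on the recurrent classes for the unit-root property, and boundedness of the powers $M_\sigma^n$ to rule out Jordan blocks of size $\ge 2$ at peripheral eigenvalues, then pass from the characteristic polynomial to (the reciprocal of) the minimal polynomial to get a denominator in which boundary roots are simple. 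Your route is longer but more explicit on the one point the paper glosses over: $\det(I-\lambda M_\sigma)$ itself can carry a boundary root with multiplicity $\ge 2$ (e.g.\ the factor $(1-\lambda)^2$ when there are two recurrent classes), so the multiplicity-one claim holds only after cancellation against the numerator (implicit in the paper's bounded-coefficients argument) or after replacing the characteristic polynomial by the minimal polynomial (your move). Both arguments are valid; the paper's is shorter, yours makes the required cancellation algebraically explicit and is self-contained apart from standard Perron--Frobenius theory.
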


\begin{proof}
By Blackwell (1965), for every $\lambda \in [0,1)$ there is a $\lambda$-discounted pure stationary optimal strategy.
Since the number of pure stationary strategies is finite,
it is sufficient to show that the function $\lambda \mapsto \gamma_\lambda(\mu,\sigma)$
is in $\calF$, for every pure stationary strategy $\sigma$.
For every pure stationary strategy $\sigma$, every prior $\mu$, and every discount factor $\lambda \in [0,1)$,
the vector $(\gamma_\lambda(s_1,\sigma))_{s_1\in S}$ is the unique solution of a system of $|S|$ linear equations%
\footnote{The result is valid for every stationary strategy, not necessarily pure.
We will need it only for pure stationary strategies.}
in $\lambda$:
\[ \gamma_\lambda(s,\sigma) = r(s,\sigma(s)) + \lambda\sum_{s' \in S} q(s' \mid s,\sigma(s))\gamma_\lambda(s',\sigma), \ \ \ \forall s \in S, \]
where $r(s,\sigma(s)) := \sum_{a \in A(s)} \sigma(a \mid s) r(s,a)$
and $q(s' \mid s,\sigma(s)) := \sum_{a \in A(s)} \sigma(a \mid s) q(s' \mid s,a)$
are the multilinear extensions of $r$ and $q$, respectively.
It follows that
\[ \gamma_\lambda(\cdot,\sigma) = (I - \lambda Q(\cdot,\sigma(\cdot)))^{-1} \cdot r(\cdot,\sigma(\cdot)), \]
where $Q(\cdot,\sigma(\cdot)) = Q = (Q_{s,s'})_{s,s' \in S}$ is the transition matrix induced by $\sigma$, that is, $Q_{s,s'} = q(s' \mid s,\sigma(s))$.
By Cramer's rule, the function $\lambda \mapsto (I - \lambda Q)^{-1}$ is a rational function
whose denominator is $\det(I-\lambda Q)$.
In particular, the roots of the denominator are the inverse of the eigenvalues of $Q$.
Since the denominator is independent of $s$, it is also the denominator of $\gamma_\lambda(\mu,\sigma) = \sum_{s \in S} \mu(s)\gamma_\lambda(s,\sigma)$.

Denote the expected payoff at stage $n$ by $x_n := \E_{\mu,\sigma}[r(s_n,\sigma(h_n))]$,
so that $\gamma_\lambda(\mu,\sigma) = \sum_{n=1}^\infty x_n \lambda^{n-1}$.
Since $|x_n| \leq \max_{s \in S, a \in A(s)} |r(s,a)|$ for every $n \in \mathbb{N}$,
it follows that the denominator $\det(I-\lambda Q(\cdot, \sigma(\cdot))$ does not have roots in the interior of the unit ball
and that all its roots that lie on the boundary of the unit ball have multiplicity 1.
Moreover, by, e.g., Higham and Lin (2011) it follows that the roots that lie on the boundary of the unit ball must be unit roots.
\end{proof}

\bigskip

The main result of this note is that the converse holds as well.
\begin{theorem}
\label{theorem:1}
A function $w : [0,1) \to \mathbb{R}$ is in $\calV$ if and only if it is the maximum of a finite number of functions in $\calF$.
\end{theorem}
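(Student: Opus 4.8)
The inclusion $\calV \subseteq Max\calF$ is exactly Proposition \ref{prop:1}, so the entire content of the theorem is the reverse inclusion $Max\calF \subseteq \calV$: given $w = \max_{1\le i\le k} f_i$ with each $f_i \in \calF$, I must exhibit an MDP and a prior whose value function is $w$. My plan is to record a short list of closure properties of $\calV$, use them to reduce the statement to a single ``atom'' lemma ($\calF \subseteq \calV$), and then realize atoms explicitly. The closure properties I would verify first are: (a) if $v^1,v^2 \in \calV$ then so is every convex combination, by taking the disjoint union of the two processes and mixing the two priors --- since the components are disconnected, an optimal strategy optimizes each reachable component separately, so the value at the mixed prior is the corresponding mixture of the two values; (b) $\calV$ is closed under multiplication by a positive constant, by scaling all payoffs; hence by (a)+(b), $\calV$ is closed under \emph{nonnegative} linear combinations; (c) $\calV$ is closed under $f \mapsto b_0 + b_1\lambda + \cdots + b_{m-1}\lambda^{m-1} + \lambda^m f$, obtained by prepending a deterministic path of $m$ states with payoffs $b_0,\dots,b_{m-1}$ that feeds into a realizer of $f$.

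Next I would handle the maximum by a ``root state'' gadget, which reduces the theorem to the case $k=1$. Introduce a new initial state $s_0$ with actions $a_1,\dots,a_k$; action $a_i$ yields immediate payoff $f_i(0)$ and moves into (the prior of) a sub-MDP realizing the shifted function $g_i(\lambda):=\tfrac{f_i(\lambda)-f_i(0)}{\lambda}$. Since each $f_i$ is analytic at $0$, the numerator $f_i-f_i(0)$ vanishes at $0$, so $g_i$ is rational with the same denominator as $f_i$; hence $g_i \in \calF$. Assuming each $g_i$ is realized as the value of its sub-MDP, the one-step discounting gives
\[ v_\lambda(s_0) = \max_{1\le i\le k}\bigl(f_i(0) + \lambda\, g_i(\lambda)\bigr) = \max_{1\le i\le k} f_i(\lambda) = w(\lambda), \]
as desired (mixing at $s_0$ cannot beat the best pure choice). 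Thus it suffices to prove the Main Lemma: every $f \in \calF$ is the value function of some MDP.

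For the Main Lemma I would argue by partial fractions together with the nonnegative-combination closure. Write $f$ as a polynomial part plus principal parts at its poles, and group complex-conjugate poles to obtain finitely many \emph{real} summands $f = \sum_k t_k$. I then realize each $t_k$ separately as the value of a (possibly degenerate, one-action) Markov reward process, using the freedom that payoffs may be any real numbers: the polynomial part by a finite payoff path as in (c); a simple unit-root pole, and more generally the whole unit-root part, by a cyclic chain of length $p$ whose $p$ payoffs are chosen so that $\tfrac{\text{numerator}}{1-\lambda^p}$ matches; a real pole $\omega$ with $|\omega|>1$ by a chain with geometric survival probability $1/\omega$ into an absorbing payoff-$0$ state (handling $\omega<-1$ by a scaled two-cycle). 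Since $f = \sum_k 1\cdot t_k$ is a nonnegative combination of the $t_k$, closure (a)+(b) assembles these into a single MDP whose value is $f$ (realize the average $\tfrac1K\sum_k t_k$ by a disjoint union with uniform prior, then scale payoffs by $K$).

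The one genuinely delicate summand, and the step I expect to be the main obstacle, is a conjugate pole pair $\{\omega,\bar\omega\}$ with $|\omega|>1$ and \emph{arbitrary} argument: realizing it requires a stochastic matrix with the prescribed eigenvalue $z:=1/\omega$ of modulus $<1$ at that argument, which is a nonnegative inverse-eigenvalue question. I would resolve it with the companion matrix of a polynomial $t^n-\sum_{i<n}c_i t^i$ having $c_i\ge 0$ and $\sum_i c_i = 1$ (this matrix is stochastic), for which $z$ is an eigenvalue precisely when
\[ z^n \in \mathrm{conv}\{1,z,z^2,\dots,z^{n-1}\}. \]
The key point is that for a non-real $z$ the powers $z^i$ spiral inward while their arguments $i\arg z$ sweep around the circle, so for $n$ large their convex hull contains a neighborhood of the origin and in particular the small point $z^n$; hence such $n$ and $c_i$ exist. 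The extra eigenvalues forced by this construction contribute spurious poles, which I cancel by choosing the (real) payoff vector appropriately: the companion form is observable from its first coordinate, so starting there every numerator of degree $<n$ is attainable, and I pick the one equal to the target numerator times the factors corresponding to the spurious roots. This yields the required quadratic-denominator term exactly, completing the Main Lemma and, through the reduction above, the theorem.
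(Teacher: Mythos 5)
Your route to the hard inclusion $Max\calF\subseteq\calV$ is genuinely different from the paper's: after the same root-state gadget for the maximum (the paper's Lemma \ref{lemma:1}), you decompose each $f\in\calF$ \emph{additively} by partial fractions and reassemble the summands via prior-mixing and payoff-scaling, whereas the paper factors the denominator \emph{multiplicatively} as $Q_1Q_2Q_3$ and glues the factors $\tfrac{1}{Q_i}$ together with the product lemma $f(\lambda)g(c\lambda)\in\calV_D$ (Lemma \ref{lemma:11}), which it describes as the most intricate step of the whole argument. Your atoms for simple poles are essentially the paper's Lemma \ref{lemma:2} (the cyclic chain for unit roots, the geometric-survival chain for real poles, and the convex-hull/companion-matrix gadget for a conjugate pair), so if the additive assembly worked in full generality it would be a real simplification.

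It does not, and that is the gap: the class $\calF$ restricts multiplicity only for unit roots, so roots of $Q$ \emph{outside} the unit ball may be repeated (the paper's final section says explicitly that such roots ``appear several times in $Q_2$ or $Q_3$''). For $f=\tfrac{1}{(2-\lambda)^2}$ or $f=\tfrac{1}{\bigl((\omega-\lambda)(\overline{\omega}-\lambda)\bigr)^2}$ the partial-fraction decomposition contains higher-order principal parts, and none of your constructions produces them: the geometric-survival chain and the companion gadget both yield simple poles. The repeated \emph{real} pole is repairable inside your framework by a triangular chain (states $1,\dots,j$ with self-loop probability $1/\omega$ and otherwise advance; the value from state $1$ has denominator $(1-\lambda/\omega)^j$, and varying the payoff vector attains every numerator of degree $<j$). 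But a repeated \emph{non-real} pair needs a stochastic matrix with a prescribed non-real eigenvalue of algebraic multiplicity at least $2$, and your planar spiral/convex-hull argument only delivers multiplicity $1$; the natural repair is to compose two companion gadgets in series with a survival parameter --- which is exactly the paper's Lemma \ref{lemma:11}. As written, your proof establishes the theorem only for the subclass of $Max\calF$ whose poles off the unit circle are simple. (A minor remark: the observability argument for killing the spurious companion-matrix roots is unneeded --- once $\tfrac{1}{Q}\in\calV_D$, closure under multiplication by polynomials, as in Lemma \ref{lemma:1a}(a,b), already cancels the unwanted factors; and your shifted functions $g_i=(f_i-f_i(0))/\lambda$ do lie in $\calF$ because $Q_i(0)\neq 0$, so the max gadget itself is sound.)
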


To avoid cumbersome notations we write $f(\lambda)$ for the function $\lambda \mapsto f(\lambda)$.
In particular, $\lambda f(\lambda)$ will denote the function $\lambda \mapsto \lambda f(\lambda)$.

We start with the following observation.
\begin{lemma}\label{lemma:1}
If $f,g \in \calV$ then $\max\{f,g\} \in \calV$.
\end{lemma}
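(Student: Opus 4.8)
The plan is to construct a single Markov decision process in which, at a distinguished initial state, the decision maker commits once and for all either to ``playing $M_f$'' or to ``playing $M_g$,'' and then continues optimally inside the chosen process. Fix Markov decision processes $M_f=(S_f,A_f,r_f,q_f)$ and $M_g=(S_g,A_g,r_g,q_g)$, with priors $\mu_f,\mu_g$ and value functions $v^f_\lambda,v^g_\lambda$, that realize $f$ and $g$; after renaming states assume $S_f\cap S_g=\emptyset$. The naive attempt --- add a fresh state with two actions, one leading to $\mu_f$ and one to $\mu_g$ --- does not work: the fresh state consumes stage $1$, so the continuation is discounted by an extra factor $\lambda$ and one obtains $\lambda\max\{f,g\}$ rather than $\max\{f,g\}$. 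The whole point of the argument is to remove this spurious factor.

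To do so I would let the \emph{first} action at the new state already coincide with the first action of the chosen process. Introduce a new state $s_0$, set the prior to $\delta_{s_0}$, and take the state space to be $\{s_0\}\cup S_f\cup S_g$. On $S_f$ and on $S_g$ keep the original rewards and transitions, so that each of these sets is closed under the dynamics and carries an exact copy of $M_f$, respectively $M_g$. At $s_0$ the available actions are the \emph{profiles} $\prod_{s\in S_f}A_f(s)$ together with the profiles $\prod_{s\in S_g}A_g(s)$; these sets are finite. For an $f$-profile $\vec a=(a_s)_{s\in S_f}$ set the reward to $\sum_{s\in S_f}\mu_f(s)\,r_f(s,a_s)$ and the transition to $\sum_{s\in S_f}\mu_f(s)\,q_f(\cdot\mid s,a_s)$, which lands in $S_f$; define the $g$-profiles symmetrically using $\mu_g$. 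Playing an $f$-profile thus reproduces, already at stage $1$, the expected reward and the stage-$2$ state distribution of $M_f$ started from $\mu_f$ under the first-stage decision rule $\vec a$.

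The verification I would carry out is through the ($\lambda$-discounted) Bellman optimality equation, a standard consequence of Blackwell (1965). Because $S_f$ is absorbing and identical to $M_f$, the value of the new process restricted to $S_f$ satisfies the Bellman equation of $M_f$ and hence equals $v^f_\lambda$; likewise on $S_g$. Writing the Bellman equation at $s_0$ and inserting these continuation values, the maximum over $f$-profiles splits coordinatewise: the objective is the $\mu_f$-weighted sum $\sum_s\mu_f(s)\bigl[r_f(s,a_s)+\lambda\sum_{s'}q_f(s'\mid s,a_s)v^f_\lambda(s')\bigr]$, and each summand depends only on its own coordinate $a_s$, so by the Bellman optimality of $v^f_\lambda$ it equals $\sum_s\mu_f(s)v^f_\lambda(s)=v^f_\lambda(\mu_f)=f(\lambda)$. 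The $g$-profiles give $g(\lambda)$ in the same way, and taking the maximum over all actions available at $s_0$ yields $v_\lambda(\delta_{s_0})=\max\{f(\lambda),g(\lambda)\}$, as desired.

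I expect the only real obstacle to be the discounting shift described above; the profile device is precisely what eliminates it, at the harmless cost of an enlarged (but still finite) action set at $s_0$. The one point that needs a word of care is the coordinatewise splitting: committing up front to an entire first-stage profile must yield the same value as choosing the first action after the initial state is realized, and this holds because the optimal continuation payoff depends only on the current state, not on the realized initial state.
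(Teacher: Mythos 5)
Your construction is essentially identical to the paper's: the paper also introduces a fresh initial state $s^*$ whose actions are first-stage action profiles (tagged by the choice of $f$ or $g$), with reward and transition equal to the $\mu_f$- or $\mu_g$-expectation of the first-stage reward and transition of the chosen process, precisely to avoid the spurious extra factor of $\lambda$. Your Bellman-equation verification fills in the step the paper leaves to the reader; the argument is correct.
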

\begin{proof}
Let $M_f = (S_f,\mu_f,A_f,r_f,q_f)$ and $M_g=(S_g,\mu_g,A_g,r_g,q_g)$ be the Markov decision processes that implement $f$ and $g$ respectively.
To implement $\max\{f,g\}$, define a Markov decision process that contains $M_f$, $M_g$, and an additional state $s^*$,
in which the decision maker chooses one of $M_f$ and $M_g$.
Formally,
let $M = (S_f\cup S_g\cup\{s^*\},A',r',q')$ be a Markov decision process
in which $A'$, $r'$, and $q'$ coincide with $A_f$, $r_f$, and $q_f$ (resp.~$A_g$, $r_g$, and $q_g$) on $S_f$ (resp.~$S_g$),
and in which in the state $s^*$ the decision maker chooses whether to follow $M_f$ or $M_g$,
and the payoff and transitions at that state are the expectation of the payoff and transitions
at the first stage in $M_f$ (resp.~$M_g$) according to $\mu_f$ (resp.~$\mu_g$):
\[ A'(s^*) := \left(\times_{s \in S} A_f(s)\right) \times \left(\times_{s \in S} A_g(s)\right) \times \{f,g\},
\]
and for every $a_f \in \times_{s \in S} A_f(s)$ and every $a_g \in \times_{s \in S} A_g(s)$,
\begin{eqnarray*}
r'(s^*,(a_f,a_g,x)) &:=& \left\{
\begin{array}{lll}
\sum_{s \in S} \mu_f(s) r_f(s,a_f(s)) & \ \ \ \ \ & x=f,\\
\sum_{s \in S} \mu_g(s) r_g(s,a_f(s)) & \ \ \ \ \ & x=g.
\end{array}
\right.\\
q'(\cdot\mid s^*,(a_f,a_g,x)) &:=& \left\{
\begin{array}{lll}
\sum_{s \in S} \mu_f(s) q_f(\cdot\mid (s_{1,f},a_f(s))) & \ \ \ \ \ & x=f,\\
\sum_{s \in S} \mu_g(s) q_g(\cdot\mid (s_{1,g},a_g(s))) & \ \ \ \ \ & x=g.
\end{array}
\right.
\end{eqnarray*}
The reader can verify that the value function of $M$ at the initial state $s^*$ is $\max\{f,g\}$.
\end{proof}

\section{Degenerate Markov Decision Processes}

As mentioned earlier, for every pure stationary strategy $\sigma$ and every initial state $s_1$,
the function $\lambda \mapsto \gamma_\lambda(s_1,\sigma)$ is a rational function of $\lambda$.
Since there is a $\lambda$-discounted pure stationary optimal strategy,
and since there are finitely many such functions,
it follows that the function $\lambda \mapsto v_\lambda$ is the maximum of finitely many rational functions,
each of which is the payoff function of some pure stationary strategy.

When the decision maker follows a pure stationary strategy,
we are reduced to a Markov decision process in which there is a single action in each state.
This observation leads us to the following definition.

A Markov decision process is \emph{degenerate} if $|A(s)|=1$ for every $s \in S$.
When $M$ is a degenerate Markov decision process we omit the reference to the action in the functions $r$ and $q$.
A degenerate Markov decision process is thus  a quadruple $(S, \mu, r,q)$, where $S$ is the state space,
 $\mu$ is a probability distribution over $S$, $r:S\to \mathbb{R}$, and $q(\cdot \mid s)$ is a probability distribution for every state $s\in S$.

Denote by $\calV_D$ the set of all functions
that are payoff functions of some degenerate Markov decision process.
Plainly, $\calV_D \subseteq \calV$.
To prove Theorem \ref{theorem:1}, we first prove that degenerate Markov decision processes implement all functions in $ \calF$.
\begin{theorem}
\label{theorem:2}
$ \calF \subseteq \calV_D $.
\end{theorem}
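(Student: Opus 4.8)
The plan is to exploit the linear structure of $\calV_D$ and reduce to realizing a convenient spanning set of $\calF$. First I would record that $\calV_D$ is a real vector space of functions: multiplying all rewards by a scalar $c$ multiplies the payoff by $c$, and placing two degenerate processes side by side with initial distribution $\alpha\mu_f+(1-\alpha)\mu_g$ produces $\alpha f+(1-\alpha)g$ (the payoff is linear in $\mu$, and the two components never communicate); combining these gives closure under arbitrary linear combinations. It is also convenient to note closure under $f\mapsto\lambda f$ (prepend a new initial state with reward $0$ whose transition is the old prior $\mu$). Since $\calF$ is itself a vector space — the order of a pole of a sum is at most the larger of the two orders, so the multiplicity-one condition at unit roots is preserved — it suffices to realize each element of a spanning set.

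Using real partial fractions, $\calF$ is spanned by three families, each lying in $\calF$: (a) the monomials $\lambda^k$; (b) for each $\beta$ in the punctured open unit disk and each order $m\ge1$, the real and imaginary parts of $\lambda^j/(1-\beta\lambda)^m$, whose only poles $1/\beta,1/\bar\beta$ lie outside the closed unit ball; and (c) for each unit root $\omega$, the real and imaginary parts of $1/(1-\omega\lambda)$, a simple pole of multiplicity one on the unit circle. I would dispatch (a) and (c) by hand. A monomial sum $\sum_{k\le d}c_k\lambda^k$ is the payoff of the deterministic chain $s_0\to s_1\to\cdots\to s_d\to\text{sink}$ with $r(s_k)=c_k$. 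For $\omega=e^{2\pi i\ell/N}$, the deterministic $N$-cycle with $r(s_j)=\cos(2\pi\ell j/N)$ (resp.\ $\sin$) gives $x_n=\cos(2\pi\ell(n-1)/N)$, hence exactly $\mathrm{Re}\,1/(1-\omega\lambda)$ (resp.\ $\mathrm{Im}$); a permutation matrix is genuinely stochastic, and a single cycle has simple eigenvalues, matching the multiplicity-one requirement.

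The substance is family (b). Recall that the payoff of a degenerate process is $\sum_{n\ge1}(\mu Q^{n-1}r)\lambda^{n-1}$, so it suffices to produce, for a given $\beta=re^{i\theta}$ with $r<1$, a substochastic matrix whose action reproduces the target sequences $\mathrm{Re}(\beta^{n-1})$, $\mathrm{Im}(\beta^{n-1})$, and, for higher order poles, their polynomial-weighted analogues. I would start from the real matrix $A$ generating the associated recurrence: a scaling–rotation block $rR_\theta$ with eigenvalues $\beta,\bar\beta$, or a real Jordan block built from it when $m>1$; here $\rho(A)=r<1$. The obstruction is that $A$ has negative entries, and I would remove this in two steps. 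First, conjugate $A=SBS^{-1}$ so that $B$ has maximal absolute row sum strictly below $1$ — possible precisely because $\rho(A)<1$ (a Jordan basis scaled by a small parameter drives the $\infty$-operator norm down to $\rho(A)$). Second, lift $B$ to a nonnegative matrix $\tilde B$ on doubled coordinates by the positive/negative-part device: represent a real vector $v$ by $(v^+,v^-)$ and let $\tilde B$ route mass so that its action on the differences equals $B$. Each row sum of $\tilde B$ then equals the corresponding absolute row sum of $B$, so $\tilde B$ is substochastic; I complete it to a stochastic $Q$ by sending the row deficiencies to an absorbing sink with reward $0$. Writing the target as $x_n=u^\top A^{n-1}w=(S^\top u)^\top B^{n-1}(S^{-1}w)$, I read it off by letting the initial distribution represent $S^{-1}w$ and the reward vector read the difference functional $S^\top u$ on the doubled coordinates, absorbing a positive normalizing scalar into the reward so that $\mu$ is a genuine probability vector. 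Letting the reward range over all functionals, this single gadget realizes every function in family (b) attached to $\{1/\beta,1/\bar\beta\}$ of order up to $m$.

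The main obstacle is exactly this nonnegative embedding: reconciling the rotation/Jordan structure needed to generate an oscillating, arbitrarily-phased decaying sequence with the nonnegativity and row-sum constraints of a (sub)stochastic matrix. The spectral-radius similarity normalization — available only because the pole lies strictly outside the unit ball — is what makes the positive/negative-part lift land inside the substochastic cone; this is also why unit-root poles, where $\rho=1$, must be handled separately by exact cycles rather than by the same machine. Once (a), (b), and (c) are realized, the vector-space closure from the first step assembles an arbitrary element of $\calF$, proving $\calF\subseteq\calV_D$.
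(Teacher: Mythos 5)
Your proof is correct, but it takes a genuinely different route from the paper's. The paper works by closure operations on $\calV_D$: it realizes the unit-root part of the denominator via $\tfrac{1}{1-\lambda^n}$ and polynomial division, realizes a single conjugate pair of poles outside the unit ball via the identity $1=\alpha_1\omega^k+\alpha_2\omega^l+\alpha_3\omega^m$ with nonnegative convex weights (yielding an explicit cycle-with-shortcuts MDP whose value is $\tfrac{1}{1-\alpha_1\lambda^k-\alpha_2\lambda^l-\alpha_3\lambda^m}$), and then glues the factors of $\tfrac{1}{Q}$ together one root at a time using the product lemma $f(\lambda)g(c\lambda)\in\calV_D$ --- the step the authors describe as the most intricate, and the reason each root must be pre-scaled by some $c\in(1,|\omega|)$. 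You instead linearize the problem: after observing that $\calV_D$ is a real vector space, partial fractions reduces $\calF$ to monomials, unit-root atoms (which you realize exactly by deterministic cycles, where the paper divides out of $1-\lambda^n$), and decaying atoms attached to poles outside the closed unit ball, and you realize the latter by a positive-realization argument --- similarity-normalizing a contraction so its absolute row sums drop below $1$, then the positive/negative-part doubling into a substochastic matrix completed by a sink. This bypasses both the product lemma and the three-power identity entirely; what it costs is the linear-algebraic work of the nonnegative embedding, which is standard but requires exactly the checks you flagged (row sums, normalization of the initial distribution into a probability vector with the constant absorbed into the rewards, and that the bilinear forms $u^\top A^{n-1}w$ of the real Jordan gadget span all polynomial-weighted oscillations $\mathrm{Re}(p(n)\beta^{n})$). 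Both arguments ultimately rest on the same dichotomy: strict spectral contraction for poles outside the closed unit ball versus exact periodicity for unit roots, which is why in both proofs the unit-root poles must be treated by a separate, exact construction.
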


Theorem \ref{theorem:2} and Lemma \ref{lemma:1} show that $ Max\calF \subseteq \calV$, and together with Proposition \ref{prop:1}
they establish Theorem \ref{theorem:1}.
The rest of the paper is dedicated to the proof of Theorem \ref{theorem:2}.

\subsection{Characterizing the set $\calV_D
$}

The following lemma lists several properties of the functions implementable by degenerate Markov decision processes.
\begin{lemma}
\label{lemma:basic}
For every $f \in \calV_D$ we have
\begin{enumerate}
\item[a)]   $af(\lambda) \in \calV_D$ for every $a \in \mathbb{R}$.
\item[b)]   $f(-\lambda) \in \calV_D$.
\item[c)]   $\lambda f(\lambda) \in \calV_D$.
\item[d)]   $f(c\lambda) \in \calV_D$ for every $c \in [0,1]$.
\item[e)]   $f(\lambda) + g(\lambda) \in \calV_D$ for every $g \in \calV_D$.
\item[f)]   $f(\lambda^n) \in \calV_D$ for every $n \in \mathbb{N}$.
\end{enumerate}
\end{lemma}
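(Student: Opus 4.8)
The plan is to exploit the fact that a degenerate Markov decision process $M=(S,\mu,r,q)$ with transition matrix $Q$ has value function $f(\lambda)=\sum_{n=1}^\infty x_n\lambda^{n-1}$, where $x_n:=\E_\mu[r(s_n)]=\mu^\top Q^{n-1}r$ is the expected reward at stage $n$. Thus realizing a function in $\calV_D$ amounts to realizing its sequence of expected stage rewards $(x_n)_{n\ge 1}$, and each item in the lemma corresponds to a simple transformation of this sequence that I would implement by an explicit modification of $M$. Throughout, the only things to check are that the modified object is again a genuine degenerate process (finite state space, stochastic transitions, a bona fide prior) and that its reward sequence is the intended one.

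For the easy items I would argue as follows. For (a) multiply the reward function by $a$, leaving $\mu$ and $q$ unchanged, which scales every $x_n$ by $a$. For (c) adjoin a new initial state $s_0$ with reward $0$ and transition $q(\cdot\mid s_0):=\mu$, and start the process at $s_0$; this inserts a leading zero into $(x_n)$, i.e.\ shifts the sequence by one, producing $\lambda f(\lambda)$. For (e) take the product process on $S_f\times S_g$ with prior $\mu_f\otimes\mu_g$, reward $r_f(s)+r_g(t)$, and transition $q_f\otimes q_g$; by independence the stage-$n$ reward is $x_n^f+x_n^g$, giving $f+g$.

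The three remaining items need an auxiliary coordinate or an absorbing state. For (b) I would duplicate each state into a pair $(s,+)$ and $(s,-)$, start in $(s,+)$ with probability $\mu(s)$, send $(s,\varepsilon)$ to $(s',-\varepsilon)$ with probability $q(s'\mid s)$, and set the reward at $(s,\varepsilon)$ to $\varepsilon\, r(s)$; since the sign coordinate does not affect the marginal over $S$, the stage-$n$ reward becomes $(-1)^{n-1}x_n$, which is exactly the sequence of $f(-\lambda)$. For (d), using $c\in[0,1]$, I would add an absorbing state $s_\dagger$ with reward $0$ and replace each transition by $c\,q(\cdot\mid s)+(1-c)\delta_{s_\dagger}$; the survival probability up to stage $n$ is $c^{n-1}$, so the stage-$n$ reward becomes $c^{n-1}x_n$, i.e.\ $f(c\lambda)$. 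For (f) I would introduce a phase counter $k\in\{0,1,\dots,n-1\}$, give reward $r(s)$ only in phase $0$ and reward $0$ otherwise, advance the phase deterministically while keeping $s$ fixed, and perform the original $q$-transition only when passing from phase $n-1$ back to phase $0$; this spaces the original rewards $n$ stages apart, so rewards appear at stages $1,n+1,2n+1,\dots$ with values $x_1,x_2,x_3,\dots$, yielding $\sum_m x_m\lambda^{n(m-1)}=f(\lambda^n)$.

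I expect the genuinely delicate points to be (b) and (f), where the state space is augmented by an auxiliary coordinate (sign or phase): there one must verify that the added coordinate evolves deterministically enough that the marginal law of the original state at stage $n$ is still governed by $\mu^\top Q^{n-1}$, so that the expected reward is the intended modification of $x_n$ and nothing else. Item (d) is the only place where the hypothesis $c\in[0,1]$ is used, precisely to keep $c\,q(\cdot\mid s)+(1-c)\delta_{s_\dagger}$ a probability distribution; all other constructions are valid without restriction.
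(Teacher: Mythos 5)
Your constructions for parts (a), (b), (c), (d), and (f) are, up to notation, exactly the ones in the paper: scaling the rewards, a two-copy alternating sign construction, a payoff-$0$ prefix state whose transition law is $\mu$, an absorbing payoff-$0$ state entered with probability $1-c$ at each stage, and a phase counter of length $n$ with payoff only in the first phase. All are correct, and your stated verification obligations (stochasticity of the modified kernels, and that the auxiliary coordinate in (b) and (f) evolves deterministically so the marginal on $S_f$ at stage $n$ is still $\mu^\top Q^{n-1}$) are the right ones.

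The only place you diverge from the paper is part (e). You take the product process on $S_f\times S_g$ with prior $\mu_f\otimes\mu_g$, kernel $q_f\otimes q_g$, and additive reward $r_f(s)+r_g(t)$, so that linearity of expectation gives the stage-$n$ reward $x_n^f+x_n^g$ directly. The paper instead takes the disjoint union $S_f\cup S_g$ with the mixed prior $\tfrac12\mu_f+\tfrac12\mu_g$, obtaining $\tfrac12 f+\tfrac12 g$, and then rescales by $2$ using part (a). Both are correct; your product construction gives $f+g$ in one step without invoking (a), at the cost of a state space of size $|S_f|\cdot|S_g|$ rather than $|S_f|+|S_g|$, while the paper's mixture construction exploits the fact that the model allows an arbitrary prior over initial states (the same device it uses in Lemma \ref{lemma:1} and the final remark). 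Nothing of substance hinges on the choice.
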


\begin{proof}
Let $M_f = (S_f,\mu_f,r_f,q_f)$ be a degenerate Markov decision process whose value function  is $f$.

To prove Part (a), we multiply all payoffs in $M_f$ by $a$.
Formally, define a  degenerate Markov decision process $M' = (S_f,\mu_f,r',q_f)$ that differs from $M$ only in its payoff function:
$r'(s) := ar_f(s)$ for every $s \in S_f$.
The reader can verify that the value function of $M'$ at the initial state $s_{1,f}$ is $af(\lambda)$.

To prove Part (b), multiply the payoff in even stages by $-1$.
Formally, let $\widehat S$ be a copy of $S_f$; for every state $s \in S_f$ we denote by $\widehat s$ its copy in $\widehat S$.
Define a  degenerate Markov decision process $M' = (S_f \cup \widehat S,\mu_f,r',q')$ with initial distribution $\mu_f$ (whose support is $S_f$)
that
visits states in $\widehat S$ in even stages and states in $S_f$ in odd stages as follows:
\begin{eqnarray*}
&&r'(s) := r_f(s), \ \ \ r'(\widehat s) := -r_f(s), \ \ \ \forall s \in S_f,\\
&&q'(\widehat s' \mid s) = q'(s' \mid \widehat s) := q_f(s' \mid s), \ \ \ \forall s,s' \in S_f,\\
&&q'(s' \mid s) = q'(\widehat s' \mid \widehat s) := 0, \ \ \ \forall s,s' \in S_f.
\end{eqnarray*}
The reader can verify that the value function of $M'$ is $f(-\lambda)$.

To prove part (c), add a state with payoff 0 from which the transition probability to a state in $S_f$ coincides with $\mu$.
Formally, define a degenerate Markov decision process $M' = (S_f \cup \{s^*\},\mu',r',q')$ in which $\mu'$ assigns probability 1 to $s^*$. $r'$
coincides with $r_f$ on $S_f$, while $r'(s^*) := 0$. Finally, $q'$
coincides with $q_f$ on $S_f$, while
at the state $s^*$,
$ q'(s_1 \mid s^*) := \mu(s_1)$.
The value function of $M'$ is $\lambda f(\lambda)$.

To prove part (d), consider the transition function that at every stage, moves to an absorbing state%
\footnote{A state $s \in S$ is \emph{absorbing} if $q(s \mid s,a) = 1$ for every action $a \in A(s)$.}
 with payoff 0 with probability $1-c$,
and with probability $c$ continues as in $M$.
Formally, define a degenerate Markov decision process
$M' = (S_f \cup \{s^*\},\mu, r',q')$ in which $\mu$ coincides with $\mu_f$, $r'$ and $q'$ coincide with $r_f$ and $q_f$ on $S_f$, $r'(s^) := 0$,
and $q'(s^* \mid s^*) := 1$ (that is, $s^*$ is an absorbing state),
and
\[ q'(s^* \mid s) := 1-c, \ \ \ q'(s' \mid s) := cq_f(s' \mid s), \ \ \ \forall s,s' \in S_f. \]
The value function of $M'$ at the initial state $s_{1,f}$ is $f(c\lambda)$.

To prove Part (e), we show that $\tfrac{1}{2}f + \tfrac{1}{2}g$ is in $\calV_D$ and we use part (a) with $a=2$.
The function $\tfrac{1}{2}f+\tfrac{1}{2}g$ is the value function of the degenerate Markov decision process
in which the prior chooses with probability $\tfrac{1}{2}$ one of two degenerate Markov decision processes that implement $f$ and $g$.
Formally, let $M_g = (S_g,\mu_g, r_g,q_g)$ be a degenerate Markov decision process whose value function is $g$.
Let $M = (S_f \cup S_g,\mu',r',q')$
be the degenerate Markov decision process whose state space consists of disjoint copies of $S_f$ and $S_g$,
the functions $r'$ (resp.~$q'$) coincide with $r_f$ and $r_g$ (resp.~$q_f$ and $q_g$) of $S_f$ (resp.~$S_g$,
and the initial distribution is $\mu'=\frac{1}{2}\mu_f+\frac{1}{2}\mu_g$.
The value function of $M$ is $\tfrac{1}{2}f + \tfrac{1}{2}g$.

To prove Part (f), we space out the Markov decision process in a way that stage $k$ of the Markov decision process that implements $f$
becomes stage $1+(k-1)n$, and the payoff in all other stages is 0.
Formally, Let $M' = (S_f \times \{1,2,\cdots,n\},\mu', r',q')$ be a degenerate Markov decision process where $\mu'=\mu_f$ and
\begin{eqnarray*}
q'((s,k+1) \mid (s,k)) := 1 &\ \ \ & k \in \{1,2,\ldots,n-1\}, s \in S,\\
q'(\cdot \mid (s,n)) := q_f(s) &\ \ \ & s \in S,\\
r'((s,1)) := r_f(s) & & s \in S,\\
r'((s,k)) := 0 & & k \in \{2,3,\ldots,n\}, s \in S.
\end{eqnarray*}
The value function of $M'$ with the prior $\mu'$ is $f(\lambda^n)$.
\end{proof}

\bigskip

\begin{lemma}\label{lemma:1a}

\begin{enumerate}
\item[a)] Every polynomial%
\footnote{Throughout the paper, whenever we refer to a polynomial we mean a polynomial with real coefficients.}
$P$ is in $\calV_D$ and if $f \in \calV_D$, then $P\cdot f$ is also in $\calV_D$.
\item[b)] Let $P$ and $Q$ be two polynomials.
If $\tfrac{1}{Q} \in \calV_D$ then $\tfrac{P}{Q} \in \calV_D$.
In particular, if $Q'$ divides $Q$ and $\tfrac{1}{Q} \in \calV_D$ then $\tfrac{1}{Q'} \in \calV_D$.
\item[c)]
If $Q$ is a polynomial whose roots are all unit roots of multiplicity 1, then $\frac{1}{Q}\in \calV_D$.
\end{enumerate}
\end{lemma}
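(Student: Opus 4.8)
The plan is to handle the three parts in order, deriving everything from the closure properties collected in Lemma \ref{lemma:basic}.

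For part (a), I would first record that every constant function lies in $\calV_D$: the degenerate process with one non-absorbing state of payoff $c$ that moves deterministically to an absorbing state of payoff $0$ has value identically $c$. Writing $P(\lambda) = \sum_{k=0}^d a_k \lambda^k$, I apply Lemma \ref{lemma:basic}(c) $k$ times to the constant $1$ to obtain $\lambda^k \in \calV_D$, then scale by $a_k$ using Lemma \ref{lemma:basic}(a) and sum using Lemma \ref{lemma:basic}(e); hence $P \in \calV_D$. Running the same argument with a given $f \in \calV_D$ in place of the constant $1$ shows $P \cdot f = \sum_k a_k(\lambda^k f) \in \calV_D$, since each $\lambda^k f \in \calV_D$ by repeated use of Lemma \ref{lemma:basic}(c).

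Part (b) is then immediate: $\tfrac{P}{Q} = P \cdot \tfrac{1}{Q}$, so if $\tfrac{1}{Q} \in \calV_D$ then $\tfrac{P}{Q} \in \calV_D$ by part (a). For the ``in particular'' clause, if $Q'$ divides $Q$ I write $Q = Q' R$ with $R$ a (real) polynomial; then $\tfrac{1}{Q'} = R \cdot \tfrac{1}{Q} \in \calV_D$ by part (a) again.

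Part (c) is the substantive step. The base case is $\tfrac{1}{1-\lambda} \in \calV_D$, realized by a single absorbing state with payoff $1$. Applying Lemma \ref{lemma:basic}(f) to $f(\lambda) = \tfrac{1}{1-\lambda}$ yields $\tfrac{1}{1-\lambda^N} \in \calV_D$ for every $N \in \mathbb{N}$. Now suppose $Q$ has roots $\omega_1,\ldots,\omega_d$, all unit roots of multiplicity $1$, and choose $N$ to be a common multiple of their orders (for instance the least common multiple of the $m_j$ with $\omega_j^{m_j}=1$). Then each $\omega_j$ is an $N$-th root of unity, hence a root of $1-\lambda^N$. Since the $\omega_j$ are distinct and $1-\lambda^N$ has every $N$-th root of unity as a simple root, the multiset of roots of $Q$ is contained in that of $1-\lambda^N$, so $Q$ divides $1-\lambda^N$. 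The desired conclusion $\tfrac{1}{Q} \in \calV_D$ then follows from part (b) with $1-\lambda^N$ playing the role of the larger polynomial.

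The one genuinely load-bearing point — and the only place where the hypothesis on the roots is used in full — is the divisibility $Q \mid (1-\lambda^N)$. It relies precisely on all roots of $Q$ being unit roots (so that they appear among the roots of some $1-\lambda^N$) \emph{and} of multiplicity $1$ (so that they do not exceed the simple roots of $1-\lambda^N$). Were a unit root to occur with multiplicity $\geq 2$, this divisibility would fail and a genuinely different, more delicate construction would be needed; so I expect this to be the step requiring the most care.
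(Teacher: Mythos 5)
Your proposal is correct and follows essentially the same route as the paper: constants realized by a two-state degenerate process, parts (a) and (b) assembled from Lemma \ref{lemma:basic}(a,c,e), and part (c) via $\tfrac{1}{1-\lambda}\in\calV_D$, Lemma \ref{lemma:basic}(f), and the divisibility $Q \mid (1-\lambda^N)$. The only difference is that you spell out why a suitable $N$ exists (distinct unit roots, all simple roots of $1-\lambda^N$), which the paper leaves implicit.
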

\begin{proof}
Part (a) follows from Lemma \ref{lemma:basic}(a,c,e)
and the observation that any constant function $a$ is in $\calV_D$,
which holds since the constant function $a$ is the value function of the degenerate Markov decision process that starts with
a state whose payoff is $a$ and continues to an absorbing state whose payoff is $0$.

Part (b) follows from Part (a).

We turn to prove Part (c).
The degenerate Markov decision process with a single state in which the payoff is 1 yields payoff $\tfrac{1}{1-\lambda}$,
and therefore $\tfrac{1}{1-\lambda} \in \calV_D$.
By Lemma \ref{lemma:basic}(f) it follows that $\tfrac{1}{1-\lambda^n} \in \calV_D$, for every $n \in \mathbb{N}$.
Let $n$ be large enough such that $Q$ divides $1-\lambda^n$.
The result now follows by Part (b).
\end{proof}

\bigskip

To complete the proof of Theorem \ref{theorem:2} we characterize the polynomials $Q$ that satisfy $\tfrac{1}{Q} \in \calV_D$.
To this end we need the following property of $\calV_D$.

\begin{lemma}
\label{lemma:11}
If $f,g \in \calV_D$ then $f(\lambda)g(\lambda c) \in \calV_D$ for every $c \in (0,1)$.
\end{lemma}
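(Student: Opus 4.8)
The plan is to pass to power-series coefficients and recognize the target as a Cauchy product. Writing $f(\lambda)=\sum_{n\ge1}x_n\lambda^{n-1}$ and $g(\lambda)=\sum_{m\ge1}y_m\lambda^{m-1}$ for the two value functions, the function I must implement is $f(\lambda)g(c\lambda)=\sum_{k\ge1}z_k\lambda^{k-1}$ with $z_k=\sum_{n+m=k+1}x_ny_mc^{m-1}$. The decisive feature is that $c<1$: when $M_g$ is run with the contraction of Lemma \ref{lemma:basic}(d), each stage carries an absorption probability $1-c>0$, and it is exactly this geometric ``leakage'' that lets a single finite process reproduce the convolution.

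The main obstacle is that an MDP accumulates rewards additively along one trajectory, whereas each term $z_k$ is a \emph{product} $x_ny_mc^{m-1}$ of rewards coming from two different processes at two different times. I resolve this by a two-phase degenerate MDP in which the $g$-state is frozen while the $f$-process runs, so that the product reward is nothing but a copy of $M_f$ rescaled by a constant, which is legitimate by Lemma \ref{lemma:basic}(a). Concretely, let $M_f=(S_f,\mu_f,r_f,q_f)$ and $M_g=(S_g,\mu_g,r_g,q_g)$ implement $f$ and $g$. I build $M$ with two kinds of states: \emph{$g$-states} $\widehat t$ ($t\in S_g$) carrying reward $0$, and \emph{$f$-states} $(s,t)$ ($s\in S_f$, $t\in S_g$) carrying reward $\tfrac{1}{1-c}r_f(s)r_g(t)$. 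From $\widehat t$ the process moves with probability $c$ to $\widehat{t'}$, $t'\sim q_g(\cdot\mid t)$, and with probability $1-c$ it switches to the $f$-phase, entering $(s_1,t)$ with $s_1\sim\mu_f$; from $(s,t)$ it runs $M_f$ on the first coordinate, $(s,t)\to(s',t)$ with $s'\sim q_f(\cdot\mid s)$, keeping $t$ frozen. Since $c\in(0,1)$, all these are genuine probabilities and $\tfrac1{1-c}$ is finite.

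The computation then proceeds in two easy steps. First, with a frozen $t$ the $f$-phase is just $M_f$ rescaled by $\tfrac{r_g(t)}{1-c}$, so its value from the entry point is $\tfrac{r_g(t)}{1-c}f(\lambda)$; substituting this into the one-step equation for $\widehat t$ gives the renewal relation $V_{\widehat t}(\lambda)=\lambda\big(r_g(t)f(\lambda)+c\sum_{t'}q_g(t'\mid t)V_{\widehat{t'}}(\lambda)\big)$, whose solution, averaged over $t_1\sim\mu_g$, is $\sum_{t}\mu_g(t)V_{\widehat t}(\lambda)=\lambda f(\lambda)\sum_{m\ge1}c^{m-1}y_m\lambda^{m-1}=\lambda f(\lambda)g(c\lambda)$. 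Second, this differs from the target only by the spurious factor $\lambda$, reflecting that the initial $g$-state has reward $0$. I remove it by starting $M$ not from $\mu_g$ but from the distribution $\mu'$ obtained after one transition, namely mass $c$ on $g$-states (the push-forward of $\mu_g$ under $q_g$) and mass $1-c$ on $f$-states $(s_1,t_1)$ with $(s_1,t_1)\sim\mu_f\otimes\mu_g$; this is a legitimate prior, and since $r(\widehat{t_1})=0$ the value from $\mu_g$ equals $\lambda$ times the value from $\mu'$, so the latter is exactly $f(\lambda)g(c\lambda)$.

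Thus $f(\lambda)g(c\lambda)\in\calV_D$. The step I expect to demand the most care is the bookkeeping of the one-stage shift together with verifying that the renewal equation is solved by the claimed geometric series; the conceptual crux, however, is the freezing device and the rescaling by $\tfrac1{1-c}$, which is precisely where the hypothesis $c<1$ is indispensable.
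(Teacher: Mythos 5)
Your construction is correct and is essentially the paper's: a product state space $S_g\cup(S_g\times S_f)$ in which the $g$-coordinate is frozen once the process enters an $f$-copy, with per-stage leakage probability $1-c$ generating the factor $g(c\lambda)$. The only differences are bookkeeping --- you rescale the $f$-copy rewards by $\tfrac{1}{1-c}$ and shift the prior one stage to absorb the spurious factor $\lambda$, whereas the paper places the first-stage $f$-payoff on the $g$-states, enters each $f$-copy at its second stage, and removes the resulting factor $1-c$ at the end via Lemma \ref{lemma:basic}(a).
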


\begin{proof}
The proof of Lemma \ref{lemma:11} is the most intricate part of the proof of Theorem \ref{theorem:1}.
We start with an example, that will help us illustrate the formal definition of the degenerate MDP that implements $f(\lambda)g(\lambda c)$.

Let $M_f$ and $M_g$ be the degenerate Markov decision processes that are depicted in Figure \arabic{figurecounter}
with the initial distributions $\mu_f(s_f) = 1$
and $\mu_g(s_{1,g}) = 1$, in which the payoff at each state appears in a square next to the state.
Denote by $f$ and $g$ the value functions of $M_f$ and $M_g$, respectively.

\bigskip

\centerline{\includegraphics{figure.1} \ \ \ \ \  \ \ \ \ \ \ \ \ \ \includegraphics{figure.2}}

\centerline{Figure \arabic{figurecounter}: An example of two MDP's.}
\addtocounter{figurecounter}{1}
\bigskip

Consider the degenerate Markov decision process $M$  depicted in Figure \arabic{figurecounter},
where $c \in (0,1)$ and the initial state is $s_{1,g}$.

\bigskip

\centerline{\includegraphics{figure.5}}

\centerline{Figure \arabic{figurecounter}: The degenerate MDP $M$.}
\bigskip

\noindent
The MDP $M$ is composed of one copy of $M_g$, and for every state in $S_g$ it contains one copy of $M_f$.
It starts at $s_{1,g}$, the initial state of $M_g$. Then, at every stage, with probability $c$ it continues as in $M_g$,
and with probability $(1-c)$ it moves to a copy of $M_f$.
In case a transition to a copy of $M_f$ occurs,
the new state is chosen according to the transitions $q_f(\cdot\mid s_f)$. This induces a distribution similar to that of the second stage of $M_f$.

The payoff in each of the copies of $M_f$ is the product of the payoff in $M_f$ times the payoff of the state in $M_g$ that has been assigned to that copy.
The payoff in each state $s_g \in S_g$ is $(1-c)r_g(s_g)$ times the expected payoff in the first stage of $M_f$.

Thus, each state in $s_g \in S_g$ serves three purposes (see Figure \arabic{figurecounter}).
First, it is a regular state in the copy of $M_g$.
Second, once a transition from $M_g$ to a copy of $M_f$ occurs (at each stage it occurs with probability $(1-c)$),
it serves as the first stage in $M_f$.
Finally, once a transition from $s_g$ to a copy of $M_f$ occurs, the payoffs in the copy are set to the product of the original payoffs in $M_f$ times $r_g(s_g)$.
\addtocounter{figurecounter}{1}

We now turn to the formal construction of $M$.
Let $f,g \in \calV_D$ and let $M_f=(S_f,\mu_f,r_f,q_f)$ (resp.~$M_g = (S_g, \mu_g, r_g,q_g)$) be the degenerate Markov decision process that implements $f$ (resp.~$g$).
Define the following degenerate Markov decision process $M = (S,\mu,r,q)$:
\begin{itemize}
\item
The set of states is $S = (S_g \times S_f) \cup S_g$.
In words,
the set of states contains a copy of $S_g$,
and for each state $s_g \in S_g$ it contains a copy of $S_f$.

\item
The initial distribution is $\mu_g$:
\[ \mu(s_g) = \mu_g(s_g), \ \ \ \forall s_g \in S_g, \ \ \ \ \ \mu(s_g,s_f) = 0, \ \ \ \forall (s_g,s_f)\in S_g \times S_f. \]

\item
The transition is as follows:
\begin{itemize}
\item
In each copy of $S_f$, the transition is the same as the transition in $M_f$:
\begin{eqnarray}\nonumber
q((s_g,s'_f) \mid (s_g,s_f)) &:=& q_f(s'_f \mid s_f), \ \ \  \forall  s_g \in S_g, s_f,s'_f \in S_f,\\ \nonumber
q((s'_g,s'_f) \mid (s_g,s_f)) &:=& 0, \ \ \ \ \ \ \ \  \forall s_g \neq s'_g \in S_g, s_f,s'_f \in S_f.
\end{eqnarray}
\item
In the copy of $S_g$, with probability $c$ the transition is as in $M_g$,
and with probability $(1-c)$ it is as in $M_f$ starting with the initial distribution $\mu_f$:
\begin{eqnarray*}
q(s'_g \mid s_g) &:=& c q_g(s'_g \mid s_g), \ \ \ \ \ \ \ \ \forall s_g,s'_g \in S_g,\\
q((s_g,s_f) \mid s_g) &:=& (1-c) \sum_{s_f'\in S_f}\mu(s_f')q_f(s_f \mid s_f'), \ \ \  \forall s_g \in S_g, s_f \in S_f,\\
q((s'_g,s_f) \mid s_g) &:=& 0 \ \ \ \ \ \ \ \forall s_g \neq s'_g \in S_g, s_f \in S_f.
\end{eqnarray*}
\end{itemize}
\item
The payoff function is as follows:
\begin{eqnarray*}
r(s_g) &:=& (1-c) r_g(s_g) \sum_{s_f \in S_f} \mu_f(s_f) r_f(s_f), \ \ \ \forall s_g \in S_g,\\
r(s_g,s_f) &:=& r_g(s_g)r_f(s_f), \ \ \ \forall s_g \in S_g, s_f \in S_f.
\end{eqnarray*}
\end{itemize}

\bigskip

\noindent We will now calculate the value function of $M$.
Denote by $$\E_{\mu_f}[r_f] := \sum_{s_f \in S_f} \mu_f(s_f) r_f(s_f)$$ the expected payoff in $M_f$ at the first stage
and by $R$ the expected payoff in $M_f$ from the second stage and on.
Then $f(\lambda) = \E_{\nu_f}[r_f] +\lambda R$.

At every stage, with probability $c$ the process remains in  $S_g$
and with probability $(1-c)$ the process leaves it.
In particular, the probability that at stage $n$ the process is still in $S_g$ is $c^{n-1}$, in which case
(a) the payoff is $(1-c)r_g(s_n)\E_{\mu_f}[r_f]$, and
(b) with probability $(1-c)$ the process moves to a copy of $M_f$, and the total discounted payoff from stage $n+1$ and on is $R$.
It follows that the total discounted payoff is
\begin{eqnarray*}
&&\sum_{n=1}^\infty c^{n-1} \lambda^{n-1} \bigl( (1-c) r_g(s_n) \E_{\mu_f}(r_f) + (1-c)\lambda r_g(s_n)R\bigr)\\
&&= (1-c)\sum_{n=1}^\infty c^{n-1}\lambda^{n-1} r_g(s_k) f(\lambda)\\
&&= (1-c) g(c\lambda) f(\lambda).
\end{eqnarray*}
The result follows by Lemma \ref{lemma:basic}(a).
\end{proof}

\begin{lemma}
\label{lemma:2}
Let $\omega \in \mathbb{C}$ be a complex number that lies outside the unit ball.\footnotemark
\begin{itemize}
\item[a)]
If $\omega \in \mathbb{C}\setminus \mathbb{R}$ then
$\tfrac{1}{(\omega-\lambda)(\overline{\omega}-\lambda)} \in \calV_D$.
\item[b)]
If $\omega \in \mathbb{R}$ then $\tfrac{1}{(\omega-\lambda)} \in \calV_D$.
\end{itemize}
\end{lemma}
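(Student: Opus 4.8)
The plan is to reduce everything to one genuinely new gadget, a small degenerate process whose transition matrix has a prescribed complex eigenvalue, and to reach all other cases from it using the closure operations of Lemma~\ref{lemma:basic} and Lemma~\ref{lemma:1a}. Part (b) is the warm-up and needs no new idea. For real $\omega>1$ I would start from the single-state process with payoff $1$, whose value is $\tfrac{1}{1-\lambda}\in\calV_D$, apply Lemma~\ref{lemma:basic}(d) with $c=1/\omega\in(0,1)$ to obtain $\tfrac{1}{1-\lambda/\omega}$, and then multiply by the constant $1/\omega$ using Lemma~\ref{lemma:basic}(a); this yields $\tfrac{1}{\omega-\lambda}$. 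For $\omega<-1$ I would run the same construction on $|\omega|$, use the reflection $f(-\lambda)\in\calV_D$ of Lemma~\ref{lemma:basic}(b) to pass from $\tfrac{1}{|\omega|-\lambda}$ to $\tfrac{1}{|\omega|+\lambda}$, and finish with a multiplication by $-1$.

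For part (a) the key point is that a $2\times 2$ nonnegative matrix has real eigenvalues, so a genuine complex conjugate pair requires at least three states. I would therefore take the three-state cyclic (circulant) process whose transition matrix is $\alpha I+\beta P+\gamma P^{2}$, where $P$ is the cyclic shift and $(\alpha,\beta,\gamma)$ ranges over the simplex. Its nontrivial eigenvalues are the conjugate pair $\nu,\overline{\nu}$ with $\nu=\alpha+\beta\zeta+\gamma\zeta^{2}$, $\zeta=e^{2\pi i/3}$, and as $(\alpha,\beta,\gamma)$ varies $\nu$ sweeps out the whole open triangle with vertices $1,\zeta,\zeta^{2}$; in particular I can prescribe $\nu$ to be any complex number of small modulus and argument in $(-2\pi/3,2\pi/3)$. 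Writing the value function in the eigenbasis as $\sum_{k}\tfrac{(\mu^{\top}u_{k})(w_{k}^{\top}r)}{1-\lambda\nu_{k}}$, I would choose the payoff $r$ to be zero-sum, which annihilates the stationary mode $\nu_{0}=1$ (so no spurious $(1-\lambda)$ factor appears), and then tune the phases of $\mu$ and $r$ inside the two-dimensional real invariant subspace of the conjugate pair so that the surviving numerator has no $\lambda$-term. The Fourier structure of the circulant makes this explicit: the factor coming from $r$ can be given an arbitrary argument, so the product $(\mu^{\top}u_{1})(w_{1}^{\top}r)$ can be placed in the exact phase that kills the linear coefficient. The outcome is a degenerate process whose value function is exactly $\tfrac{\mathrm{const}}{(\rho_{0}e^{i\phi}-\lambda)(\rho_{0}e^{-i\phi}-\lambda)}$ with a nonzero constant, which I normalize to $1$ by Lemma~\ref{lemma:basic}(a).

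It then remains to move this exact reciprocal to an arbitrary target $\omega=\rho e^{i\theta}$ with $\rho>1$ and $\theta\in(0,\pi)$. Here I would use the substitution $f(\lambda^{n})\in\calV_D$ of Lemma~\ref{lemma:basic}(f): applied to $\tfrac{1}{(\rho_{0}e^{i\phi}-\lambda)(\rho_{0}e^{-i\phi}-\lambda)}$ it produces, still with numerator $1$, the reciprocal of a real polynomial whose roots sit at radius $\rho_{0}^{1/n}$ and arguments $(\pm\phi+2\pi k)/n$. Choosing $\rho_{0}=\rho^{n}$ forces the radius to be exactly $\rho$, and choosing $n$ so that $\phi:=n\theta \bmod 2\pi$ lands in the reachable window $(0,2\pi/3)$ puts $\theta$ among those arguments; note that $\rho_{0}=\rho^{n}$ makes $|\nu|=\rho^{-n}$ as small as needed, so the required $\nu$ is safely inside the triangle. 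Since the numerator is the constant $1$ and $(\omega-\lambda)(\overline{\omega}-\lambda)$ is a real factor of this polynomial, Lemma~\ref{lemma:1a}(b) lets me divide out all the extra roots and conclude $\tfrac{1}{(\omega-\lambda)(\overline{\omega}-\lambda)}\in\calV_D$. I expect the main obstacle to be the middle step: arranging, with a bona fide probability vector $\mu$ and a real payoff $r$, that the numerator collapses to a nonzero constant, since it is precisely this ``numerator $=1$'' property that makes the division step of Lemma~\ref{lemma:1a}(b) applicable. A secondary, more routine nuisance is checking that for every $\theta\in(0,\pi)$ some exponent $n$ sends $n\theta$ into $(0,2\pi/3)$ modulo $2\pi$, for which the reflection $f(-\lambda)$ of Lemma~\ref{lemma:basic}(b), swapping $\theta$ with $\pi-\theta$, supplies the remaining flexibility.
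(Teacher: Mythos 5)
Your proposal is correct, but for part (a) it takes a genuinely different route from the paper's; part (b) is the paper's argument essentially verbatim. The paper's gadget for (a) is a single cycle of length $m$ with shortcuts: since $|\omega|>1$ and $\omega\notin\mathbb{R}$, one can write $1=\alpha_1\omega^k+\alpha_2\omega^l+\alpha_3\omega^m$ as a convex combination of three powers of $\omega$, and the corresponding renewal-type chain (payoff $1$ at the last state, $0$ elsewhere) has value exactly $\tfrac{1}{1-\alpha_1\lambda^{k}-\alpha_2\lambda^{l}-\alpha_3\lambda^{m}}$, so the numerator is the constant $1$ for free, $\omega$ and $\overline{\omega}$ are roots of the denominator, and Lemma \ref{lemma:1a}(b) finishes. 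You instead prescribe the eigenvalue pair directly via a $3$-state circulant and then transport it to the target with $\lambda\mapsto\lambda^{n}$. The step you flag as the main obstacle does go through: writing $a=(\mu^{\top}u_{1})(w_{1}^{\top}r)$, the value is $\tfrac{2\,\mathrm{Re}(a)-2\lambda\,\mathrm{Re}(a\overline{\nu})}{(1-\lambda\nu)(1-\lambda\overline{\nu})}$; the map $r\mapsto w_{1}^{\top}r$ restricted to zero-sum real $r$ is onto $\mathbb{C}$ (the images of $(1,-1,0)$ and $(0,1,-1)$ differ by the non-real factor $\overline{\zeta}$, hence are $\mathbb{R}$-independent), so after picking any $\mu$ with $\mu^{\top}u_{1}\neq 0$ you can set $a=i\nu$, which kills the $\lambda$-term while keeping $\mathrm{Re}(a)=-\mathrm{Im}(\nu)\neq 0$. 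One simplification is available: the open triangle $\mathrm{conv}(1,\zeta,\zeta^{2})$ contains a disc around $0$, so at small modulus \emph{every} argument of $\nu$ is reachable, not only $(-2\pi/3,2\pi/3)$; you may therefore take any $n$ with $n\theta\not\equiv 0 \pmod{\pi}$ (such $n$ exists for every $\theta\in(0,\pi)$) and dispense with the reflection case entirely. What the paper's route buys is brevity and an automatic constant numerator; what yours buys is that the one nontrivial existence claim in the paper (that $1$ lies in the convex hull of three powers of $\omega$, which the paper asserts without proof) is replaced by explicit linear algebra in the Fourier basis of a circulant.
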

\footnotetext{For every complex number $\omega$, the conjugate of $\omega$ is denoted $\overline\omega$.}

\begin{proof}
We start by proving part (a).
For every complex number $\omega \in \mathbb{C}\setminus \mathbb{R}$ that lies outside the unit ball
there are three natural numbers $k < l < m$ and three nonnegative reals $\alpha_1, \alpha_2,\alpha_3$
that sum up to 1 such that $1 = \alpha_1\omega^k + \alpha_2\omega^l + \alpha_3\omega^m$.

Consider the degenerate Markov decision process that is depicted in Figure~\arabic{figurecounter}.
That is, the set of states is $S_f := \{s_1,s_2,\cdots,s_m\}$,
the payoff function is
\[ r(s_m) := 1, \ \ \ \ \ r(s_j):=0, \ \ \ 1 \leq j < m,\]
and the transition function is
\begin{eqnarray*}
&&q(s_{m-k+1} \mid s_m) := \alpha_1, q(s_{m-l+1} \mid s_m) := \alpha_2, , q(s_1 \mid s_m) := \alpha_3, \\
&&q(s_{j+1} \mid s_j) = 1, \ \ \ 1 \leq j < m.
\end{eqnarray*}

\bigskip

\centerline{\includegraphics{figure.4}}

\medskip

\centerline{Figure \arabic{figurecounter}: The degenerate MDP in the proof of Lemma \ref{lemma:2}.}

\bigskip

The discounted value satisfies
\begin{eqnarray*}
v_\lambda(s_j) &=& \lambda v_\lambda(s_{j+1}), \ \ \ 0 \leq j < m,\\
v_\lambda(s_m) &=& 1 + \lambda \bigl(\alpha_1 v_\lambda(s_{m-k+1}) + \alpha_2 v_\lambda(s_{m-l+1}) + \alpha_3 v_\lambda(s_{1})\bigr).
\end{eqnarray*}
It follows that
\[ v_\lambda(s_m) = \tfrac{1}{1 - \alpha_1 \lambda^{k} -\alpha_2\lambda^{l} - \alpha_3 \lambda^{m}} . \]
Hence, this function  is in $\calV_D$.
Since $\omega$ is one of its roots,
 by Lemma \ref{lemma:1a}(b) we obtain that
$\tfrac{1}{(\omega-\lambda)(\overline{\omega}-\lambda)} \in \calV_D$,
as desired.

\bigskip

We turn to prove part (b).
Let $\omega$ be a real number with $\omega > 1$.
By Lemma \ref{lemma:1a}(b)  $\tfrac{1}{1- \lambda}$ is in $\calV_D$.
By Lemma \ref{lemma:basic}(d), $\tfrac{1}{1- \frac{\lambda}{\omega}}\in\calV_D$,
and by Lemma \ref{lemma:basic}(a), $\tfrac{1}{\omega- \lambda}\in\calV_D$.
 When $\omega < -1$,
by Lemma \ref{lemma:basic}(a,b),  $\tfrac{1}{\omega-\lambda} \in \calV_D$.
\end{proof}
\addtocounter{figurecounter}{1}

\bigskip
\section{The Proof of Theorem \ref{theorem:2}}
Let $Q\not= 0$ be a polynomial with real coefficients whose roots are either outside the unit ball or unit roots with multiplicity 1.
To complete the proof of Theorem \ref{theorem:2} we prove that $\frac{1}{Q}\in \calV_D$.
Denote by $\Omega_1$ the set of all roots of $Q$ that are unit roots,
by $\Omega_2$ the set of all roots of $Q$ that lie outside the unit ball and have a positive imaginary part,
and by $\Omega_3$ the set of all real roots of $Q$ that lie outside the unit ball.
If some roots have multiplicity larger than 1, then they appear several times in $Q_2$ or $Q_3$.

For $i=1,3$ denote $Q_i=\prod_{\omega\in \Omega_i}(\omega-\lambda)$ and set $Q_2= \prod_{\omega\in \Omega_2}(\omega-\lambda)(\overline{\omega}-\lambda)$;
when $\Omega_i=\emptyset$ we set $Q_i=1$.
Then $Q=Q_1\cdot Q_2\cdot Q_3$. If $\Omega_1\not = \emptyset$, then by Lemma \ref{lemma:1a}(c) we have $\frac{1}{Q_1}\in \calV_D$.
Otherwise $Q_1=1$, in which case, $\frac{1}{Q_1}\in \calV_D$ by Lemma \ref{lemma:1a}(a).

Fix $\omega\in \Omega_2$ and let $c \in \mathbb{R}$ be such that $1 < c < |\omega|$.
Since $\frac{\omega }{c}$ lies outside the unit ball, Lemma \ref{lemma:2}(a) implies that
$g_{\omega}(\lambda):=\tfrac{1}{(\frac{\omega}{c} -\lambda)(\overline{\frac{\omega}{c} }-\lambda)}$ is in $\calV_D$.
By Lemma \ref{lemma:11}(a),
$g_{\omega}(\frac{1}{c}\cdot\lambda)\cdot \tfrac{1}{Q_1}= \tfrac{ c^2}{(\omega-\lambda)(\overline{\omega}-\lambda)}\cdot \tfrac{1}{Q_1}\in \calV_D$. By Lemma
\ref{lemma:basic}(a), $\tfrac{1}{(\omega-\lambda)(\overline{\omega}-\lambda)}\cdot \tfrac{1}{Q_1}\in \calV_D$.
Applying successively this argument for the remaining roots in $\Omega_2$, one obtains that $\tfrac{1}{Q_2}\cdot \tfrac{1}{Q_1}\in \calV_D$.

To complete the proof we apply a similar idea to $\omega\in \Omega_3$. Fix $\omega\in \Omega_3$ and let $c \in \mathbb{R}$ be such that $1 < c < |\omega|$. By Lemma \ref{lemma:2}(b),
$\tfrac{1}{\frac{\omega}{c}-\lambda} \in \calV_D$ and again by By Lemma \ref{lemma:11}(a) and Lemma \ref{lemma:basic}(a),
$\tfrac{1}{(\omega-\lambda)}\cdot \tfrac{1}{Q_2}\cdot \tfrac{1}{Q_1}\in \calV_D$.
By iterating this argument for every $\omega\in \Omega_3$ one obtains that $\tfrac{1}{Q_3}\cdot \tfrac{1}{Q_2}\cdot \tfrac{1}{Q_1}\in \calV_D$,
as desired.

\section{Final Remark}

The set $\calV$ contain all value functions of MDP's in which the state at the first stage is chosen according to a probability distribution $\mu$.
One can wonder whether the set of implementable value functions shrinks if one restricts attention to MDP's in which the first stage is given; that is, $\mu$ assigns probability 1 to one state.
The answer is negative: the value function of any MDP in which the initial state is chosen according to a probability distribution (prior)
can be obtained as the value function of an MDP in which the initial state is deterministic.
Indeed, let $M$ be an MDP with a prior. One can construct $M'$ by adding to $M$ an initial state $s'$ in which the payoff is the expected payoff at the first stage of $M$
and the transitions are the expected transitions after the first stage of $M$. We applied a similar idea in the proof of Lemma \ref{lemma:1}.

\end{document}